\documentclass[11pt]{amsart}

\usepackage{amsfonts}
\usepackage{amsthm, amssymb, ulem, amscd} 
\usepackage{amsmath}
\usepackage[mathscr]{euscript}
\usepackage{color}
\usepackage{enumerate}
\usepackage{graphicx}

\setlength{\oddsidemargin}{0.2in}
\setlength{\evensidemargin}{0.2in} 
\setlength{\textwidth}{6.11in}
\setlength{\topmargin}{0.3in} 


\def\crn#1#2{{\vcenter{\vbox{
        \hbox{\kern#2pt \vrule width.#2pt height#1pt
           }
          \hrule height.#2pt}}}}
\def\intprod{\mathchoice\crn54\crn54\crn{3.75}3\crn{2.5}2}
\def\into{\mathbin{\intprod}}

\setcounter{section}{0}

\pagestyle{headings}

\newcommand{\pa}{\partial}
\newcommand{\na}{\nabla}

\newcommand{\Sym}{\operatorname{Sym}}
\newcommand{\Rm}{\operatorname{Rm}}

\newcommand{\Span}{\operatorname{span}}

\newcommand{\R}{\mathbb R}

\newcommand{\C}{\mathbb C}

\newcommand{\T}{\mathbb T}

\newcommand{\al}{\alpha}

\newcommand{\ga}{\gamma}
\newcommand{\si}{\sigma}
\newcommand{\de}{\delta}
\newcommand{\be}{\beta}

\newcommand{\om}{\omega}
\renewcommand{\th}{\theta}

\newcommand{\Rb}{\overline{R}}

\newcommand{\wh}{\widehat}

\newcommand{\Rt}{\widetilde{R}}
\newcommand{\Lt}{\widetilde{L}}

\newcommand{\Tt}{\widetilde{T}}

\newcommand{\nb}{\overline{\nabla}}

\newcommand{\Om}{\Omega}
\newcommand{\Si}{\Sigma} 
\newcommand{\cL}{\mathcal{L}}

\newcommand{\cE}{\mathcal{E}}

\newcommand{\cI}{\mathcal I}  
\newcommand{\cJ}{\mathcal J}  

\newcommand{\cG}{\mathcal{G}}
\newcommand{\cX}{\mathcal{X}}
\newcommand{\cR}{\mathcal{R}}

\newcommand{\cU}{\mathcal{U}}

\newcommand{\cT}{\mathcal{T}}

\newcommand{\bR}{\mathbf{R}}
\newcommand{\bT}{\mathbf{T}}
\newcommand{\bG}{\mathbf{G}}

\renewcommand{\sp}{\mathsf{p}}

\def\sideremark#1{\ifvmode\leavevmode\fi\vadjust{\vbox to0pt{\vss
 \hbox to 0pt{\hskip\hsize\hskip1em
 \vbox{\hsize2cm\tiny\raggedright\pretolerance10000
  \noindent #1\hfill}\hss}\vbox to8pt{\vfil}\vss}}}

 %

\theoremstyle{plain}
\newtheorem{theorem}{Theorem}[section]
\newtheorem{lemma}[theorem]{Lemma}
\newtheorem{proposition}[theorem]{Proposition}

\theoremstyle{definition}

\newtheorem{definition}[theorem]{Definition}

\theoremstyle{remark}

\numberwithin{equation}{section}

\title[Natural Tensors for Riemannian Submanifolds]{Geodesic
    Normal Coordinates and Natural Tensors for Pseudo-Riemannian Submanifolds}

\author{C Robin Graham} 
\address{Department of Mathematics, University of Washington,
Box 354350\\
Seattle, WA 98195-4350, USA}
\email{robin@math.washington.edu}

\author{Tzu-Mo Kuo}
\address{Department of Mathematics, National Taiwan University\\
Taipei 10617, Taiwan}
\email{tzumokuo@ntu.edu.tw}

\begin{document}

\begin{abstract}
We construct a version of geodesic normal coordinates adapted to a
submanifold of a pseudo-Riemannian manifold and show that the Taylor
coefficients of the metric in these coordinates can be expressed as
universal polynomials in the components of the covariant derivatives of the
background curvature tensor and the covariant derivatives of the second
fundamental form.  We formulate a definition of natural
submanifold tensors and show that these are linear combinations of
contractions of covariant derivatives of the background curvature tensor
and covariant derivatives of the second fundamental form.  We also describe
how this result gives a similar characterization of natural submanifold  
differential operators.  
\end{abstract}

\maketitle

\thispagestyle{empty}

\section{Introduction}\label{intro}
A fundamental result in classical Riemannian geometry is that at the origin
in geodesic normal coordinates, each partial derivative of the metric can
be written as a universal polynomial in the components of the Riemann
curvature tensor and its covariant derivatives.  See, for example,
Corollary 2.9 of \cite{G} for a computation of the first few derivatives,
and its introduction for discussion and references to the
classical literature.  This result found application in the heat equation
proof of the index theorem in \cite{ABP}, where it was used to show that 
the heat kernel coefficients could be written in terms of curvature.  It 
is used in combination with Weyl's classical invariant theory to
characterize natural tensors on Riemannian manifolds as linear combinations
of contractions of
tensor products of the curvature tensor and its covariant derivatives.  In
this paper, we prove an analogous result for submanifolds of
pseudo-Riemannian manifolds and apply it to give an analogous
characterization of natural submanifold tensors.

Let $(M^n,g)$ be a pseudo-Riemannian manifold and let $\Si^k\subset M^n$ be
an embedded submanifold, $1\leq k \leq n-1$.  We assume throughout that
$\Si$ is nondegenerate in the sense that the pullback metric $g|_{T\Si}$ is
nondegenerate.  Denote by
$(p,q)$, $p+q=k$, the signature of $g|_{T\Si}$ and by $(p',q')$,
$p'+q'=n-k$, the signature of $g|_{N\Si}$, where $N\Si$ denotes the normal
bundle of $\Si$.  We will say that such a $\Si$ is of type
$((p,q),(p',q'))$.  Decompose $\R^n = \R^k\oplus \R^{n-k}$ and fix a  
reference quadratic form $h\in S^2\R^n{}^*$ such that $h|_{\R^k}$ has
signature $(p,q)$, $h|_{\R^{n-k}}$ has signature $(p',q')$, and
$\R^k \perp \R^{n-k}$.
Let $\sp\in \Si$ and let $\{e_{\al}\}_{1\leq \al\leq k}$,
$\{e_{\al'}\}_{k+1\leq \al'\leq n}$ be frames for 
$T_\sp\Si$, $N_\sp\Si$, resp., satisfying $g(e_{\al},e_{\be})=h_{\al\be}$, 
$g(e_{\al'},e_{\be'})=h_{\al'\be'}$.  In \S\ref{normalcoordinates} we
construct a submanifold geodesic normal coordinate system
$(x,u)=(x^1,\cdots,x^k,u^{k+1},\cdots,u^{n})$ near $\sp$
depending only on the frames $\{e_{\al}\}$, $\{e_{\al'}\}$.  Such
coordinates are therefore determined up to $H:=O(p,q)\times O(p',q')$.  
These coordinates are constructed by first considering usual geodesic
normal coordinates $x^\al$ on $\Si$ with respect to  
$\{e_\al\}$, extending the $e_{\al'}$ to $\Si$ to be parallel along radial
geodesics, and then taking $(x,u)$ to be the associated Fermi coordinates. 

Write $z=(x,u)$, so that $z^\al=x^\al$, $1\leq \al\leq k$, and
$z^{\al'}= u^{\al'}$, $k+1\leq \al'\leq n$.  Let lowercase Latin indices
$i$, $j$ run between $1$ and $n$.  In these coordinates the metric can be  
written 
\begin{equation}\label{gcoords}
  g=g_{ij}dz^idz^j =  g_{\al\be}dx^\al dx^\be +2g_{\al\al'}dx^\al
du^{\al'}+g_{\al'\be'}du^{\al'}du^{\be'}.
\end{equation}
Our main result is the following.
\begin{theorem}\label{main}
For each $i$, $j$ and list $K$ of indices, the derivative
$\pa_z^K g_{ij}(\sp)$ in submanifold geodesic normal coordinates can be    
expressed as a universal polynomial in the components at $\sp$ of the
curvature tensor of $g$ and its 
iterated covariant derivatives and the second fundamental form of $\Si$ and
its iterated covariant derivatives.  
\end{theorem}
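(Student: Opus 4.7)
The plan is to proceed inductively by separating the two coordinate directions, using the two geometric facts that determine the coordinates: (a) at $u=0$ the $x^\al$ restrict to intrinsic geodesic normal coordinates on $\Si$ at $\sp$ for the induced metric, and (b) for each fixed $x$, the curve $t\mapsto(x,tu)$ is an ambient geodesic issuing from $p(x)\in\Si$.  Because $e_{\al'}(x)\perp T_{p(x)}\Si$ and the frame is parallel along radial $\Si$-geodesics, at $u=0$ the metric has block structure $g_{\al\be}(x,0)=g|_{T\Si}(\pa_{x^\al},\pa_{x^\be})$, $g_{\al\al'}(x,0)=0$, $g_{\al'\be'}(x,0)=h_{\al'\be'}$.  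The classical Riemannian GNC theorem applied to $(\Si,g|_{T\Si})$ expresses each $\pa_x^J g_{\al\be}(\sp)$ as a universal polynomial in the iterated $\Si$-covariant derivatives at $\sp$ of the intrinsic curvature of $\Si$; repeated use of the Gauss and Codazzi--Mainardi equations rewrites these in terms of ambient curvature, second fundamental form, and their iterated covariant derivatives at $\sp$.

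Fact (b) yields the ambient identity $\Gamma^i_{\al'\be'}(x,u)\,u^{\al'}u^{\be'}=0$ in $(x,u)$.  Differentiating in $u$, exactly as in the classical proof of the Riemannian GNC theorem, recursively determines the symmetrizations over normal indices of all $u$-derivatives of the $\Gamma^i_{\al'\be'}$ at $u=0$.  Combined with the block identities above, which control $\Gamma^i_{jk}(x,0)$ when one or both lower indices are tangential (using first $u$-derivatives of the metric, which introduce the second fundamental form), an induction on $|K|$ expresses $\pa_u^K g_{ij}(x,0)$ as a universal polynomial in the components at $p(x)$, in the moving frame $\{\pa_{x^\al}|_{u=0},e_{\al'}(x)\}$, of ambient curvature, second fundamental form, and their iterated covariant derivatives.

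For the general mixed derivative $\pa_x^J\pa_u^K g_{ij}(\sp)$, the previous step realizes $\pa_u^K g_{ij}|_{u=0}$ as the coordinate expression of a smooth tensor field $T^{(K)}$ on $\Si$ built polynomially from ambient curvature, second fundamental form, and their iterated covariant derivatives, contracted with the moving frame.  Since $e_{\al'}(x)$ is parallel along radial $\Si$-geodesics and $x^\al$ are intrinsic geodesic normal coordinates on $\Si$, applying the classical GNC argument once more on $\Si$ expresses $\pa_x^J T^{(K)}(\sp)$ as iterated $\Si$-covariant derivatives of $T^{(K)}$ at $\sp$, which a final application of Gauss and Codazzi--Mainardi converts into the required polynomial form.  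The main technical obstacle is the bookkeeping needed to merge the two directions---tracking how $\pa_x$ acts on the moving frame $e_{\al'}(x)$ and on ambient quantities evaluated at the varying base point $p(x)$---and verifying at each step that the polynomial structure in ambient curvature and second fundamental form is preserved.  The crucial enabling fact is the parallelism of $e_{\al'}(x)$ along $\Si$-geodesics, which ensures that $\Si$-covariant derivatives of the frame reduce to Christoffel terms already polynomial in the desired quantities by induction.
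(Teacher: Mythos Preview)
Your overall strategy---separate the problem into the Fermi (normal) expansion at each base point $p(x)\in\Si$ and then the tangential differentiation along $\Si$ in intrinsic geodesic normal coordinates---is sound and genuinely different from the paper's route.  The paper instead builds a single parallel orthonormal coframe $\th^i$ (parallel first along radial $\Si$-geodesics, then along normal geodesics), writes $\th^i=a^i{}_j\,dz^j$, and derives Jacobi-type recursions $(\cU^2\pm\cU)a^i{}_j=(\text{curvature})$ and $(\cX^2+\cX)a^\al{}_\be=(\text{intrinsic curvature})$ directly from the Cartan structure equations, together with two additional first-order identities (involving $L$ and $R^{\al'}{}_{\be'\al\be}$) needed at the degenerate level $N=1$.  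This handles all metric blocks uniformly and is entirely self-contained.  Your approach, by contrast, reduces to two known black boxes (the Fermi expansion and the classical GNC theorem on $\Si$, extended to tensor fields valued in $N\Si$) and then glues them via Gauss--Codazzi--Ricci; it is conceptually cleaner but cite-dependent.

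There is, however, a genuine gap in your justification of the Fermi step.  Differentiating the geodesic identity $\Gamma^i_{\al'\be'}u^{\al'}u^{\be'}=0$ produces only the fully symmetrized $u$-derivatives of $\Gamma^i_{\al'\be'}$ at $u=0$, and the ``block identities'' you cite fix only the \emph{values} $\Gamma^i_{jk}(x,0)$ when a lower index is tangential, not their normal derivatives.  Neither of these ingredients touches $\pa_u^Kg_{\al\be}(x,0)$ for $|K|\geq 2$: the Christoffel symbols $\Gamma_{i,\al'\be'}$ simply do not contain $\pa_{\al'}g_{\al\be}$.  What closes the induction for the tangential block is precisely the curvature identity $R_{\al\al'\be\be'}=-\tfrac12\pa_{\al'}\pa_{\be'}g_{\al\be}+(\text{terms already determined})$, or equivalently the Jacobi equation along normal geodesics---and this is exactly what the paper's coframe recursion $(\cU^2-\cU)a^i{}_\be=(\text{curvature})$ encodes.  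Your sketch asserts the correct output (polynomial in ambient $R$, $L$, and their derivatives) but does not supply the mechanism by which ambient curvature enters the recursion for $g_{\al\be}$.  You should either invoke the Fermi expansion as a known result or insert the Jacobi/curvature step explicitly; once that is done, the rest of your argument goes through.  A smaller point: your final ``classical GNC argument on $\Si$'' must be applied not only to $T\Si$ but also to the normal bundle $N\Si$ with its induced connection and radially parallel frame, so that the normal connection coefficients are likewise polynomial in $R^\perp$ (hence in $R$ and $L$ via the Ricci equation); you allude to this but do not state it.
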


Natural tensors on Riemannian manifolds are defined as  
isometry-invariant assignments of a tensor to each Riemannian manifold, 
subject to a polynomial regularity condition (see \cite{ABP}, \cite{E}). 
The analog of Theorem~\ref{main} for Riemannian metrics enables one to show  
that a natural tensor can be written as a polynomial in the curvature
tensor and its covariant derivatives which is orthogonally invariant when 
viewed as an algebraic object on which $O(n)$ acts, and 
Weyl's classical invariant theory shows that these are linear combinations
of contractions of tensor products.  As an application of
Theorem~\ref{main}, we formulate a definition of a natural tensor for
submanifolds and prove that any such tensor can be    
written as a linear combination of partial contractions of tensor products 
of the background curvature tensor and its 
covariant derivatives and the second fundamental form and its covariant
derivatives.  For simplicity we consider only covariant
tensors; the general case reduces to this upon lowering all indices. 

If $\Si^k\subset M^n$ is a submanifold, a general coordinate system
$(x,u)=(x^1,\cdots x^k,u^{k+1},\cdots,u^{n})$ near a point of $\Si$ is
called {\it adapted} if $\Si = \{u=0\}$.  Write $z=(x,u)$ as above.  In the 
following definition, the type $((p,q),(p',q'))$ of $\Si$ is fixed.  Note
that the type determines the signature $(p+p',q+q')$ of $g$, the dimension
$k=p+q$ of $\Si$, and the dimension $n=p+q+p'+q'$ of $M$.  

\begin{definition}\label{naturaltensors}
A natural tensor on submanifolds of type $((p,q),(p',q'))$, 
or a {\bf natural submanifold tensor}, 
is an assignment, to each embedded submanifold $\Si$ of type
$((p,q),(p',q'))$ of a pseudo-Riemannian manifold $(M,g)$, of a tensor
\[
T\in \Gamma\big((T^*\Si)^{\otimes r}\otimes (N^*\Si)^{\otimes s}\big)
\]
for some integers $r$, $s\geq 0$, such that the following two 
conditions hold:
\begin{enumerate}
\item
  If $\widetilde{\Si}\subset (\widetilde{M},\widetilde{g})$ and
    $\varphi: (M,g)\rightarrow (\widetilde{M},\widetilde{g})$ is an 
    isometry for which $\varphi(\Si) =\widetilde{\Si}$, then
    $\varphi^*\widetilde{T}=T$.  
\item
There are polynomials $T_{\cI\cJ}$ so that in any adapted coordinates
$z=(x,u)$, $T$ is given by  
\[
T=T_{\cI\cJ}\Big(g^{\al\be},g^{\al'\be'},\pa_z^Kg_{ij}\Big)
dx^\cI\otimes du^\cJ.  
\]
Here $\cI$ is a list of $r$ indices between $1$ and $k$ and $\cJ$ is a list
of $s$ indices between $k+1$ and $n$.  The argument $\pa_z^Kg_{ij}$ denotes 
all derivatives of all $g_{ij}$ of orders up to $N$ for some $N$, except  
that the variables $\pa_x^I g_{\al\al'}$ do not appear (since these vanish
in adapted coordinates).  The tensor $T$ and the $g^{\al\be}$,
$g^{\al'\be'}$, $\pa_z^Kg_{ij}$ are evaluated at $(x,0)$.   
\end{enumerate}
\end{definition}

\noindent
To clarify, $T_{\cI\cJ}$ is a polynomial function on the vector space in
which the inverse metric and the metric and its derivatives   
take values in local coordinates, taking into account the symmetry in the
metric and partial derivative indices and that the $\pa_x^I g_{\al\al'}$
vanish.  

We denote by $\na$ the Levi-Civita connection of $g$ and by $\nb$ the
induced connections on $T\Si$ and $N\Si$.  Let
$\Rm\in \Gamma(T^*M^{\otimes 4})$ denote the Riemann curvature tensor of
$g$ and let $L\in \Gamma(S^2T^*\Si\otimes N\Si)$ denote the second 
fundamental form of $\Si$, defined by  
$L(X,Y) = (\na_XY)^\perp$ for $X$,$Y\in T\Si$.
In Section~\ref{tensors} we prove 

\begin{theorem}\label{contractions}
Every natural submanifold tensor is an $\R$-linear combination of partial
contractions of tensors
\begin{equation}\label{tensor product}
\pi_1(\na^{M_1}\Rm)\otimes \cdots \otimes \pi_p(\na^{M_p}\Rm)\otimes 
\nb^{N_1}L\otimes\cdots\otimes\nb^{N_q}L\otimes \pi(g^{\otimes P}).
\end{equation}
Here $M_j$, $N_j$, and $P$ denote powers, and $\pi$ and $\pi_j$ denote
restriction to $\Si$ followed by projection to either $T\Si$ or $N\Si$ 
in each index.  This tensor is viewed as covariant in all indices (i.e.,
the $N\Si$ index on $L$ is lowered), and the 
contractions are taken with respect to the metrics induced by $g$ on
$T^*\Si$ and $N^*\Si$, for some partial pairings of tangential and normal
indices.     
\end{theorem}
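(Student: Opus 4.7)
The plan is to reduce the problem to an invariant-theoretic statement for $H := O(p,q)\times O(p',q')$, use Theorem~\ref{main} to replace metric-derivative data by covariant derivatives of $\Rm$ and $L$, and then apply a Weyl first fundamental theorem for $H$.

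Fix $\sp\in\Si$ together with frames $\{e_\al\}, \{e_{\al'}\}$ compatible with the reference form $h$, and work in the associated submanifold geodesic normal coordinates $(x,u)$. These are adapted coordinates, so condition (2) of Definition~\ref{naturaltensors} presents $T_{\cI\cJ}(\sp)$ as a universal polynomial in $g^{\al\be}(\sp)=h^{\al\be}$, $g^{\al'\be'}(\sp)=h^{\al'\be'}$, and the derivatives $\pa_z^K g_{ij}(\sp)$. Substituting Theorem~\ref{main} rewrites this as a universal polynomial $P_{\cI\cJ}(X)$ in the components $X$ of the iterated covariant derivatives $\na^M\Rm$ and $\nb^N L$ at $\sp$, expressed in the frames $\{e_\al,e_{\al'}\}$.

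Next I would argue that the polynomial map $P\colon V\to W$ is $H$-equivariant, where $V$ is the $H$-module of possible abstract components subject to the usual symmetries (Bianchi identities, symmetry of $L$, etc.) and $W=(\R^k)^{*\otimes r}\otimes(\R^{n-k})^{*\otimes s}$. Given $h\in H$, carrying out the construction with the rotated frames $h\cdot\{e_\al,e_{\al'}\}$ produces a second submanifold geodesic normal coordinate system whose Jacobian at $\sp$ is $h$. Since condition (2) supplies the same polynomial in every adapted coordinate system, comparing the two presentations of $T_{\cI\cJ}(\sp)$ and using that both $X$ and the tensor components of $T$ transform by $h$ yields the identity $P(h\cdot X)=h\cdot P(X)$ whenever $X$ is the curvature/second-fundamental-form data of a genuine triple $(M,g,\Si)$. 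A jets-realization step, showing that every $X\in V$ satisfying the evident symmetries is realized at $\sp$ by some such triple, promotes this to $H$-equivariance on all of $V$.

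Finally, apply the first fundamental theorem for the product $H=O(p,q)\times O(p',q')$: every polynomial $H$-equivariant map into a tensor power representation is an $\R$-linear combination of partial contractions carried out using the forms $h|_{\R^k}$ and $h|_{\R^{n-k}}$, which pair tangential with tangential and normal with normal indices. Additional factors $\pi(g^{\otimes P})$ are inserted whenever the required free indices of $W$ are not already supplied by the curvature/second-fundamental-form factors. Transcribing this back to the geometric setting produces exactly expressions of the form \eqref{tensor product}. The principal obstacles are the realization step and the use of Weyl's theorem in indefinite signature; the latter is handled by complexification to $O(n,\C)$, where the classical statement applies, and analytic continuation of the equivariance identity back to the real forms.
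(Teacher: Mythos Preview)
Your overall strategy matches the paper's up until the ``jets-realization step,'' and that is where the argument has a genuine gap. The components of the projections $\pi(\na^M\Rm)$ together with $\nb^N L$ at $\sp$ are \emph{not} freely specifiable subject only to the obvious pointwise symmetries (Bianchi identities, symmetry of $L$). They are tied together by the Gauss, Codazzi, and Ricci equations and by all of their iterated covariant derivatives. For instance, the Codazzi equation forces the tangential--tangential--tangential--normal projection of $\Rm$ to equal an antisymmetrized $\nb L$, so that piece of $\Rm$ is not an independent variable at all. Consequently your $V$ (defined by ``the usual symmetries'') is strictly larger than the set of realizable data, the realization claim is false as stated, and you only know $H$-equivariance of $P$ on an $H$-invariant subset that you have not characterized. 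Enlarging the list of relations defining $V$ to include Gauss--Codazzi--Ricci and their derivatives does not obviously fix this: you would then need to prove that this enlarged list is complete \emph{and} apply Weyl's theorem on a subvariety rather than on a linear representation.

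The paper sidesteps the realization problem entirely. It proves an extension lemma (Lemma~\ref{extension}): if $P$ is a polynomial on an $H$-representation $\bT$ whose restriction to an $H$-invariant subset $\cE$ is $H$-invariant, then there is a globally $H$-invariant polynomial $P'$ on $\bT$ agreeing with $P$ on $\cE$. The proof polarizes $P$ to a linear functional on a tensor space, uses semisimplicity of $H$ (valid in indefinite signature, without passing to the complexification) to produce an $H$-invariant complement to the span of the image, and sets the functional to zero there. The paper then pairs the $\T^{r,s}$-valued polynomial $\Tt=T\circ\cG$ against an auxiliary tensor $S\in\T^{r,s}$ to obtain a scalar polynomial $H$-invariant on $\cR(\bG_N)\times\T^{r,s}$, applies the lemma with $\cE=\cR(\bG_N)\times\T^{r,s}$, invokes Weyl's theorem on the \emph{extension} $P'$ (now invariant on the full linear space), and finally strips off $S$. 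This extension device is exactly what replaces your realization step.
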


Our interest in Theorem~\ref{contractions} arose in connection with the
papers \cite{CGK} and \cite{CGKTW}.  The main result of \cite{CGK} is a
construction of GJMS-type operators and $Q$-curvatures for submanifolds of
a Riemannian manifold.  Definitions similar to
Definition~\ref{naturaltensors} are given there for natural submanifold  
scalar differential operators and natural submanifold scalars,
and it is shown that the submanifold GJMS operators and $Q$-curvatures are 
natural in that sense.  However, it is not evident from the construction  
that they can be expressed in terms of curvature and second fundamental
form.  The submanifold $Q$-curvature plays an important role in 
\cite{CGKTW}, where such a description is needed.  We attempted  
unsuccessfully to find a version of Theorem~\ref{contractions} in the 
literature, hence the present paper.    

In Section \ref{normalcoordinates} we construct the submanifold geodesic
normal coordinates and prove Proposition~\ref{conditions}, which
characterizes them in terms of conditions on the metric.  In 
Section \ref{curvature} we prove Theorem~\ref{main} and also
Proposition~\ref{linear terms}, which 
identifies the linear 
terms in curvature and second fundamental form for the Taylor coefficients
of the metric in submanifold geodesic normal coordinates.  In
Section \ref{tensors} we prove Theorem~\ref{contractions} and   
indicate how this result can be extended to a characterization of natural 
differential operators.

\bigskip
\noindent
{\it Acknowledgements.}  This project is an outgrowth of the August 2022   
workshop ``Partial differential equations and conformal geometry'' at the
American Institute of Mathematics.  The authors are
grateful to AIM for its support and for providing a structure and
environment conducive to fruitful research.  We would like  
to thank Jeffrey Case and Jie Qing for helpful conversations.

\section{Submanifold Geodesic Normal Coordinates}\label{normalcoordinates}

Let $\Si \subset (M,g)$ be a submanifold of type $((p,q),(p',q'))$, with
inclusion $i$.  Let $\sp\in \Si$ and let $\{e_{\al}\}_{1\leq \al\leq k}$, 
$\{e_{\al'}\}_{k+1\leq \al'\leq n}$ be frames for 
$T_\sp\Si$, $N_\sp\Si$, resp., satisfying $g(e_{\al},e_{\be})=h_{\al\be}$, 
$g(e_{\al'},e_{\be'})=h_{\al'\be'}$.

Begin by constructing usual geodesic normal coordinates for $i^*g$ 
on $\Si$ near $\sp$.  Thus 
\[
\Si\ni \exp_\sp^{i^*g}(x^\al e_\al)\rightarrow x=(x^1,\cdots,x^k)\in \R^k. 
\]
In these coordinates, the curve $\gamma_x(t):=tx$ is
a geodesic for $i^*g$ for each $x\in \R^k$ near $0$.  Extend each
$e_{\al'}$ to a section of $N\Si$ in a  
neighborhood of $\sp$ in $\Si$ by requiring that it be parallel along each
$\gamma_x$ with respect to the induced connection on $N\Si$:
\[
\nb_{\dot{\gamma}_x} e_{\al'} = 0. 
\]
Then construct Fermi coordinates $u^{\al'}$ relative to $\{e_{\al'}\}$ and  
extend the $x^{\al}$ to be constant along the normal geodesics to obtain
the full submanifold geodesic normal coordinate system:
\[
M\ni \exp^g_{\exp_\sp^{i^*g}(x^\al e_\al)}(u^{\al'}e_{\al'})\rightarrow
(x,u)=(x^1,\cdots,x^k,u^{k+1},\cdots,u^{n}) \in
\R^k\times \R^{n-k}.
\]
These coordinates are characterized by $\pa_\al(\sp)=e_\al$, 
$\pa_{\al'}(\sp)=e_{\al'}$, and the following properties:
\begin{itemize}
\item[(I)]
  $\Si = \{u=0\}$ and $\sp=\{x=0, u=0\}$.
\item [(II)]
  The curve $\gamma_x(t)=(tx,0)$ is a geodesic for $i^*g$ for each
  $x\in \R^k$ near $0$.  
\item[(III)]
  $\pa_{\al'}|_\Si \in \Gamma(N\Si)$ and $\pa_{\al'}|_\Si$ is parallel
  with respect to $\nb$ along each $\gamma_x$.
\item[(IV)]
  The curve $\si_{x,u}(t)= (x,tu)$ is a geodesic for $g$ for each 
  $(x,u)\in \R^k\times \R^{n-k}$ near $(0,0)$. 
\end{itemize}

\begin{proposition}\label{conditions}
Let $(x,u)$ be a coordinate system near $\sp$ satisfying (I).  
Then $(x,u)$ is the submanifold geodesic normal coordinate system
corresponding to the frames $\pa_\al(\sp)=e_\al$,
$\pa_{\al'}(\sp)=e_{\al'}$, if and only if the following hold:
\begin{itemize}
\item[(A)]
  $g_{\al\be}(x,0)x^\be = h_{\al\be}x^\be$
\item[(B)]
  $g_{\al'\be'}(x,u)u^{\be'} = h_{\al'\be'}u^{\be'}$
\item[(C)]
  $g_{\al\al',\be'}(x,0)x^\al  = 0$
\item[(D)] 
  $g_{\al\al'}(x,u)u^{\al'}  = 0$
\end{itemize}
\end{proposition}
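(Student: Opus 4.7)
The plan is to convert each of the defining geometric properties (II), (III), (IV) into algebraic conditions on $g_{ij}$ in coordinates and match them to (A)--(D). First, I would handle (II) $\Leftrightarrow$ (A): condition (II) combined with the initial frame says exactly that the $x^\alpha$ are geodesic normal coordinates for the induced metric $i^*g$ on $\Si$, whose coordinate components are $g_{\alpha\beta}(x,0)$, and the classical Gauss-lemma characterization of geodesic normal coordinates yields (A).

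Next, I would unpack (IV): the geodesic equation for $\sigma_{x,u}$ gives the identity $\Gamma^i_{\alpha'\beta'}(x,u)u^{\alpha'}u^{\beta'}=0$. Lowering the index and expanding the Christoffel symbols, the standard radial argument shows (IV) is equivalent to
\[
g_{i\alpha'}(x,u)u^{\alpha'}=g_{i\alpha'}(x,0)u^{\alpha'}\qquad\text{for every }i.
\]
I would then use (III) to supply the correct boundary values on $\Si$. The property $\pa_{\alpha'}|_\Si\in\Gamma(N\Si)$ is equivalent to $g_{\alpha\alpha'}(x,0)=0$, and combined with the $i=\alpha$ case of the radial formula this gives (D). Since parallel transport preserves inner products, the parallel extension of the orthonormal frame $\{e_{\alpha'}\}$ along $\gamma_x$ satisfies $g_{\alpha'\beta'}(x,0)=h_{\alpha'\beta'}$; combined with the $i=\beta'$ case this gives (B). The remaining content of (III), namely $\nb_{\dot\gamma_x}\pa_{\alpha'}=0$, translates, after projecting the ambient covariant derivative onto $N\Si$ (using $g^{\alpha\gamma'}(x,0)=0$) and simplifying Christoffel symbols via the data just established, into
\[
x^\beta\bigl(g_{\beta\delta',\alpha'}-g_{\beta\alpha',\delta'}\bigr)(x,0)=0.
\]
Differentiating (D) twice in the $u$-variables and evaluating at $u=0$ shows that $g_{\alpha\gamma',\delta'}(x,0)$ is antisymmetric in $(\gamma',\delta')$, so the displayed identity collapses to (C).

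I expect the main obstacle to be this last step, where the projection of the ambient covariant derivative onto $N\Si$ has to be executed carefully: one needs the block-diagonalization of $g^{ij}$ on $\Si$, the constancy of $g_{\alpha'\beta'}$ along $\Si$, and the antisymmetry that falls out of (D), combined in just the right way to recognize the parallel-transport condition as (C). The converse direction reverses each implication: given (A)--(D), the antisymmetry again follows from (D), then (C) recovers the parallel-transport condition, the radial formula is recovered from (B) and (D), and inverting the Gauss-lemma arguments returns (II) and (IV).
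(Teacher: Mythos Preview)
Your plan is essentially the same as the paper's and would succeed, but there is one logical slip worth fixing. You assert that (IV) by itself is equivalent to the radial identity
\[
g_{i\alpha'}(x,u)\,u^{\alpha'}=g_{i\alpha'}(x,0)\,u^{\alpha'}\qquad\text{for every }i.
\]
For $i=\gamma'$ this is indeed the classical Gauss-lemma equivalence. For $i=\alpha$, however, the lowered geodesic equation reads
\[
\big(g_{\alpha\alpha',\beta'}-\tfrac12\,g_{\alpha'\beta',\alpha}\big)u^{\alpha'}u^{\beta'}=0,
\]
which, written as an ODE for $F_\alpha:=g_{\alpha\alpha'}u^{\alpha'}$, is $(u^{\beta'}\partial_{\beta'}-1)F_\alpha=\tfrac12\,g_{\alpha'\beta',\alpha}(x,0)\,u^{\alpha'}u^{\beta'}$ (after using the $i=\gamma'$ case). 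This forces a nonzero quadratic term in $F_\alpha$ unless $g_{\alpha'\beta',\alpha}(x,0)=0$, so the tangential radial identity does \emph{not} follow from (IV) alone. The missing ingredient is exactly the constancy $g_{\alpha'\beta'}(x,0)=h_{\alpha'\beta'}$ that you correctly extract from (III); you just need to invoke it \emph{before} running the radial ODE for $i=\alpha$, not after. The paper does this by first isolating (B) from the normal component of (IV) and then using (B) to kill $g_{\alpha'\beta',\alpha}u^{\alpha'}u^{\beta'}$ in the tangential component, arriving at $g_{\alpha\alpha',\beta'}u^{\alpha'}u^{\beta'}=0$; combined with $g_{\alpha\alpha'}(x,0)=0$ this gives (D) via the homogeneity argument you describe.

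Once that ordering is fixed, your derivation of (C) --- antisymmetry of $g_{\alpha\gamma',\delta'}(x,0)$ from (D), combined with the skew identity $x^\beta g_{\beta[\alpha',\delta']}(x,0)=0$ coming from the parallel-transport part of (III) after using $g_{\alpha'\beta',\alpha}(x,0)=0$ --- matches the paper's argument exactly, and your sketch of the converse is correct as well.
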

\begin{proof}
First we reformulate conditions (II)--(IV).
  
It is a classical fact that coordinates $x^\al$ are geodesic normal 
coordinates for a metric $g_{\al\be}(x)$ if and only if 
$g_{\al\be}(x)x^\be = h_{\al\be}x^\be$ (see, for example, Theorem 2.3 of
\cite{E} for a proof). So (II)$\iff$(A).

Condition (III) says $g_{\al\al'}(x,0) =0$ and
$\Gamma_{\al\al'}^{\be'}(x,0)x^\al=0$.  The latter is equivalent to
\[
g_{\al'\be',\al}(x,0)x^\al =
2g_{\al[\al',\be']}(x,0)x^\al,
\]
which is equivalent to
\[
g_{\al'\be',\al}(x,0)x^\al =0,\qquad g_{\al[\al',\be']}(x,0)x^\al=0,
\]
since the first is symmetric in $\al'\be'$ and the second is skew. So (III) 
is equivalent to
\begin{equation}\label{3equiv}
g_{\al\al'}(x,0)=0,\qquad g_{\al'\be',\al}(x,0)x^\al =0,\qquad
g_{\al[\al',\be']}(x,0)x^\al=0. 
\end{equation}

Condition (IV) is equivalent to the two conditions
\[
\Gamma_{\al'\be'}^{\ga'}(x,u)u^{\al'}u^{\be'}=0,\qquad
\Gamma_{\al'\be'}^{\al}(x,u)u^{\al'}u^{\be'}=0. 
\]
The first condition is the statement that for each $x$, $u^{\al'}$ is
a geodesic normal coordinate system for the metric 
$g_{\al'\be'}(x,u)du^{\al'}du^{\be'}$.  This is equivalent to (B) by the
classical fact quoted above.  The second condition is equivalent to 
\begin{equation}\label{IVpart}
g_{\al'\be',\al}(x,u)u^{\al'}u^{\be'} =
2g_{\al\al',\be'}(x,u)u^{\al'}u^{\be'}. 
\end{equation}
Note that applying $\pa_\al$ to (B) gives
$g_{\al'\be',\al}(x,u)u^{\be'} =0$, which implies that the left-hand side
of \eqref{IVpart} vanishes.  So in the presence of (B), \eqref{IVpart} is
equivalent to
\begin{equation}\label{IVpart2}
g_{\al\al',\be'}(x,u)u^{\al'}u^{\be'}=0.  
\end{equation}
Thus (IV) is equivalent to (B) and \eqref{IVpart2}.  

In order to show that (A)--(D)$\Longrightarrow$(II)--(IV), we only need to 
prove \eqref{3equiv} and \eqref{IVpart2}.

First we show \eqref{3equiv}.  Applying $\pa_{\al'}$ to (D) at $u=0$ 
gives $g_{\al\al'}(x,0)=0$, which is the first equation of \eqref{3equiv}.  
Applying $\pa_{\be'}$ to (B) at $u=0$ gives $g_{\al'\be'}(x,0)=h_{\al'\be'}$. So 
$g_{\al'\be',\al}(x,0)=0$, which implies the second equation of
\eqref{3equiv}.  Clearly (C) implies the third equation of 
\eqref{3equiv}.

In order to prove \eqref{IVpart2}, first apply $\pa_{\be'}$ to (D) to
obtain   
\[
g_{\al\al',\be'}(x,u)u^{\al'}+g_{\al\be'}(x,u)=0. 
\]
Now contract against $u^{\be'}$ and use (D) on the second term.  

We now show that (II)--(IV)$\Longrightarrow$ (A)--(D).  We have 
seen that (II)$\iff$(A) and (IV)$\iff$ (B) and \eqref{IVpart2}.  In
particular, it only remains to prove (C) and (D).

First we prove (C).  Applying $\pa^2_{\al'\be'}$ to \eqref{IVpart2} at
$u=0$ gives $g_{\al(\al',\be')}(x,0)=0$, so certainly  
$g_{\al(\al',\be')}(x,0)x^\al=0$.  Combined with the third equation of
\eqref{3equiv}, this gives $g_{\al\al',\be'}(x,0)x^\al=0$, which is (C).

Finally we prove (D).  Set $F_\al(x,u):=g_{\al\al'}(x,u)u^{\al'}$.  Then  
\[
u^{\be'}\pa_{\be'}F_\al = g_{\al\al',\be'}u^{\al'}u^{\be'}+g_{\al\be'}u^{\be'}
= F_\al,
\]
where we have used \eqref{IVpart2}.  So $F_\al$ is homogeneous of degree 1
in $u$.  But Condition (III) implies $g_{\al\al'}(x,0)=0$, so 
$F_\al=O(|u|^2)$.  Hence $F_\al = 0$, which is (D). 
\end{proof}

\section{Proof of Theorem~\ref{main}}\label{curvature}
There are several ways to prove the analog of Theorem~\ref{main} for 
Riemannian metrics.  The following proof generalizes the proof in
\cite{ABP}.  

\bigskip
\noindent
{\it Proof of Theorem~\ref{main}}.
For $1\leq \al\leq k$, define a section $\th^\al$ of $T^*\Si$ by parallel 
transport in $T^*\Si$ of $\big(dx^{\al}|_{T\Si}\big)(\sp)$ 
along radial geodesics in $\Si$.  Extend $\th^\al$ to a section of
$T^*M|_{\Si}$ by requiring that it vanish on $N\Si$.  Similarly, for
$k+1\leq \al'\leq n$, define a
section $\th^{\al'}$ of $N^*\Si$ by parallel transport in $N^*\Si$ of
$\big(du^{\al'}|_{N\Si}\big)(\sp)$ along radial 
geodesics in $\Si$, and extend $\th^{\al'}$ to a section of $T^*M|_\Si$ by
requiring that it vanish on $T\Si$.  Observe that
$\th^{\al'}=du^{\al'}$ on $\Si$.  Extend $\th^\al$, $\th^{\al'}$ to  
1-forms in $M$ near $\sp$ by parallel transport along normal geodesics.  

Parallel translation shows that $g|_{T\Si} = h_{\al\be}\th^\al \th^\be$ and 
$g|_{N\Si}= h_{\al'\be'}\th^{\al'}\th^{\be'}$.  Since
$g|_\Si=g|_{T\Si}+g|_{N\Si}$, it follows that
$g|_\Si=h_{ij}\th^i\th^j$.  
Parallel translation along normal geodesics now implies that
\begin{equation}\label{gframe}
g=h_{ij}\th^i\th^j
\end{equation}
in a neighborhood in $M$ of $\sp$.

Write
\begin{equation}\label{thetaa}
\th^i = a^i{}_j\, dz^j.
\end{equation}
Substituting into \eqref{gframe} and comparing with
\eqref{gcoords} shows that
\begin{equation}\label{ga}
  g_{ij}= h_{kl}a^k{}_ia^l{}_j.
\end{equation}
Note that  
\begin{equation}\label{aSigma}
a^\al{}_{\be'}|_\Si = 0, \qquad a^{\al'}{}_{\be}|_\Si = 0, \qquad
a^{\al'}{}_{\be'}|_\Si=\de^{\al'}{}_{\be'}.
\end{equation}

The connection 1-forms $\om^i{}_j$ for the frame $\{\th^i\}$ are defined by
\[
\na \th^i = \om^i{}_j\otimes\th^j,
\]
and satisfy
\begin{equation}\label{skew}
h_{ik}\om^k{}_j+h_{jk}\om^k{}_i=0.
\end{equation}
The structure equations are
\begin{equation}\label{structure}
d\th^i = \om^i{}_j\wedge \th^j,\qquad
d\om^i{}_j -\om^i{}_k\wedge \om^k{}_j = \Om^i{}_j, 
\end{equation}
where $\Om^i{}_j$ are the curvature 2-forms for the frame $\{\th^i\}$.
These are given by
\[
\Om^i{}_j=
-\tfrac12 \Rt^i{}_{jkl}\th^k\wedge \th^l
= -\tfrac12 a^i{}_r(a^{-1})^s{}_j R^r{}_{skl}  dz^k\wedge dz^l,
\]
where 
$\Rt^i{}_{jkl}$ are the components of the curvature tensor in the
frame $\{\th^i\}$ and $R^i{}_{jkl}$ are the components in the frame 
$\{dz^i\}$.  The components $\Lt_{\al\be}^{\al'}$ of $L$ 
relative to the frame $\{\th^\al\}$ and the components
$L_{\al\be}^{\al'}$ relative to $\{dx^\al\}$ satisfy
\begin{equation}\label{L}
\Lt_{\ga\be}^{\al'}\,a^\ga{}_\al  = -\om^{\al'}{}_\be(\pa_\al)
= L_{\al\ga}^{\al'}(a^{-1})^\ga{}_\be.
\end{equation}
Note that $(a^{-1})^\al{}_\be$ is unambiguous on $\Si$ because of
\eqref{aSigma}.

Set $\cU = u^{\al'}\pa_{\al'}$.  The $\th^i$ are parallel along the curves
$\si_{x,u}$, so
\[
\om^i{}_j(\cU)=0 \quad \text{on  } M.
\]
The tangent vectors at $t=0$ to the $\si_{x,u}$ span all of $N\Si$.
Therefore   
\begin{equation}\label{normalvanishing}
\om^i{}_j(N\Si)=0 \quad \text{on  } \Si. 
\end{equation}
The curves $\si_{x,u}$ are
geodesics satisfying $\dot{\si}_{x,u}=\cU$, so also $\cU$ is parallel along 
$\si_{x,u}$.  Hence $\th^i(\cU)$ is constant on $\si_{x,u}$.  Evaluating at
$t=0$ shows that
\begin{equation}\label{Ucontract}
\th^{\al}(\cU)=0,\qquad \th^{\al'}(\cU)=u^{\al'} \quad \text{on  } M.
\end{equation}

Similarly, set $\cX = x^\al \pa_\al$.  The $\th^\al|_{T\Si}$ and
$\th^{\al'}|_{N\Si}$ are parallel along the curves $\ga_x$, so 
\begin{equation}\label{Xvanish}
\om^\al{}_\be(\cX)=0,\qquad \om^{\al'}{}_{\be'}(\cX)=0 \quad \text{on  } \Si.
\end{equation}

We derive some identities involving the $a^i{}_j$ which we will use to
prove the theorem.  Shorten the notation for Lie derivatives by writing
just $\cU$ for $\cL_{\cU}$.  Begin by calculating
\[
\cU \th^i = \cU \into (\om^i{}_j\wedge \th^j) + d (\th^i(\cU))
= -u^{\ga'}\om^i{}_{\ga'} + d (\th^i(\cU)).  
\]
Next apply $|u|\circ \cU \circ |u|^{-1}$ to both sides, where $|\cdot|$
denotes the Euclidean norm on $\R^{n-k}$.  Recall Euler's
relation:  $\cU \eta = \lambda \eta$ if $\eta$ is a differential form
homogeneous of degree $\lambda$ in $u$.  Note that  
$|u|^{-1} d (\th^i(\cU))$ is homogeneous of degree 0 in $u$ by 
\eqref{Ucontract}.  Therefore
\begin{equation}\label{firstway}
|u|\circ \cU \circ |u|^{-1}(\cU \th^i)
= -u^{\ga'}\cU\into d\om^i{}_{\ga'} = -u^{\ga'}\cU\into \Om^i{}_{\ga'} 
=a^i{}_r (a^{-1})^s{}_{\ga'} R^r{}_{s\al'l} u^{\al'}u^{\ga'}dz^l.  
\end{equation}
Now do a different calculation of the left-hand side directly from
\eqref{thetaa}: 
\[
\cU\th^i =
\cU(a^i{}_j) dz^j + a^i{}_{\be'} du^{\be'}.
\]
So
\[
|u|\circ \cU \circ |u|^{-1}(\cU \th^i)
=\big(\cU^2a^i{}_\be-\cU a^i{}_\be\big)dx^\be
+\big(\cU^2a^i{}_{\be'}+\cU a^i{}_{\be'}\big)du^{\be'}.
\]
It follows that
\begin{equation}\label{Uderivs1}
(\cU^2-\cU)a^i{}_\be
  = a^i{}_r (a^{-1})^s{}_{\ga'} R^r{}_{s\al'\be} u^{\al'}u^{\ga'}
\end{equation}
\begin{equation}\label{Uderivs2}
(\cU^2+\cU)a^i{}_{\be'}
 = a^i{}_r (a^{-1})^s{}_{\ga'} R^r{}_{s\al'\be'} u^{\al'}u^{\ga'}.     
\end{equation}

Next carry out the same calculation as above on 
$\Si$ with respect to $\cX$ and the metric $i^*g$.  
One obtains
\begin{equation}\label{Xderivs}
(\cX^2+\cX)a^\al{}_{\be}
  = a^\al{}_\ga (a^{-1})^\de{}_{\rho} \Rb^\ga{}_{\de\si\be}
  x^{\si}x^{\rho}\quad \text{on  } \Si,
\end{equation}
where $\Rb$ denotes the curvature tensor of $i^*g$.  

We also need to know about the first normal derivatives of $a^\al{}_\be$
and $a^{\al'}{}_{\be}$.  {From} \eqref{thetaa}, \eqref{aSigma}, we have on 
$\Si$: 
\[
d\theta^i = d_ua^i{}_\be \wedge dx^\be
\mod \{dx^\ga\wedge dx^\de, du^{\al'}\wedge du^{\be'}\},
\]
where $d_u$ denotes the exterior derivative in the $u$ variables.  
Taking $i=\al$, comparing with \eqref{structure}, and recalling 
\eqref{normalvanishing}, 
\eqref{skew}, \eqref{L} gives 
\begin{equation}\label{aL}
\pa_{\al'}a^\al{}_\be = -\om^\al{}_{\al'}(\pa_\be)
=-h^{\al\ga}h_{\al'\be'}L_{\be\de}^{\be'}(a^{-1})^\de{}_\ga \quad \text{on  }\Si.
\end{equation}
Next, taking $i=\al'$, comparing with \eqref{structure} and using 
\eqref{normalvanishing} gives 
$\pa_{\be'}a^{\al'}{}_\be = -\om^{\al'}{}_{\be'}(\pa_\be)$.  Thus 
\[
\cX \big(\pa_{\be'}a^{\al'}{}_\be \big)
=-(\cX\om^{\al'}{}_{\be'})(\pa_\be)-\om^{\al'}{}_{\be'}(\cX\pa_\be). 
\]
Since $\om^{\al'}{}_{\be'}(\cX\pa_\be)=
-\om^{\al'}{}_{\be'}(\pa_\be) =\pa_{\be'}a^{\al'}{}_\be$, we obtain
\[
(\cX+1) \big(\pa_{\be'}a^{\al'}{}_\be \big) =
-(\cX\om^{\al'}{}_{\be'})(\pa_\be).
\]
Using \eqref{Xvanish}, \eqref{structure}, this becomes 
\begin{equation}\label{X+1}
\begin{split}
(\cX+1) \big(\pa_{\be'}a^{\al'}{}_\be \big) 
& =- (\cX\into d\om^{\al'}{}_{\be'})(\pa_\be)
=-\cX\into\big(\Om^{\al'}{}_{\be'}
+\om^{\al'}{}_\ga\wedge \om^\ga{}_{\be'}\big)(\pa_\be)\\
&=R^{\al'}{}_{\be'\al\be}x^\al
+h^{\ga\de}h_{\be'\ga'}\big(L^{\al'}_{\rho\al}L^{\ga'}_{\be\si}
-L^{\al'}_{\rho\be}L^{\ga'}_{\al\si}\big)(a^{-1})^\rho{}_\ga
(a^{-1})^\si{}_\de x^\al.
\end{split}
\end{equation}

We now prove by induction on $N$ the following statement:  For all $i$,
$j$, $I$ and for all $|J|\leq N$, each coordinate derivative  
\begin{equation}\label{deriva}
(\pa_x)^I(\pa_u)^Ja^i{}_j(0,0)
\end{equation}
can be expressed as a polynomial in the coordinate derivatives at
$\sp=(0,0)$ of the components in the frame $\{dz^i\}$ of the curvature
tensor of $g$ and 
the second fundamental form.  For $N=0$, this is clear from \eqref{aSigma}
for $a^\al{}_{\al'}$, $a^{\al'}{}_\al$, and $a^{\al'}{}_{\be'}$.  For
$a^\al{}_\be$ we proceed by induction on $M=|I|$.  The result is true  
for $M=0$ since $a^\al{}_\be(\sp)=\de^\al{}_\be$.  Denote by
$\widehat{a^\al{}_\be}(M)$ the homogeneous term of degree $M$ in the 
Taylor polynomial at $x=0$ of $a^\al{}_\be(x,0)$.  Take the
homogeneous term of degree $M$ in the Taylor polynomial of both sides of 
\eqref{Xderivs}.  The left-hand side gives
$(M^2+M)\widehat{a^\al{}_\be}(M)$.  Since $\Rb$ can be expressed in terms
of $R$ and $L$ by the Gauss curvature equation, the coefficients in the
term of degree $M$ on the right-hand side can each be written as a
polynomial in derivatives 
of components of $R$, $L$, and coefficients of $\widehat{a^\al{}_\be}(M')$
with $M'\leq M-2$.  The conclusion for $N=0$ and $M>0$ follows by
induction.  

For fixed $x$, denote by $\widehat{a^i{}_j}(x,N)$ the homogeneous term of 
degree $N$ in the Taylor polynomial in $u$ at $u=0$ of $a^i{}_j(x,u)$.
Denote by $\widehat{a^i{}_j}(M,N)$ the homogeneous term of degree $M$ in
$x$ and $N$ in $u$ at $(x,u)=(0,0)$.  
Equation \eqref{Uderivs2} shows that $\widehat{a^i{}_{\be'}}(x,1)=0$.
Differentiating in $x$ gives $\widehat{a^i{}_{\be'}}(M,1)=0$
for all $M\geq 0$.  This proves the induction statement for $N=1$ for 
$a^\al{}_{\be'}$ and $a^{\al'}{}_{\be'}$.  A different
argument is needed for $a^\al{}_{\be}$ and
$a^{\al'}{}_{\be}$ since $\cU^2-\cU$ annihilates 
$\widehat{a^i{}_\be}(x,1)$ in \eqref{Uderivs1}.  For
$a^\al{}_{\be}$, the statement for $N=1$ 
follows directly upon differentiating \eqref{aL} in $x$.  
For $a^{\al'}{}_{\be}$, take homogeneous terms in $x$ of both 
sides of \eqref{X+1}.
This completes the proof of the induction step for $N=1$.  
For $N>1$, the induction step is proved by first taking the homogeneous
term of degree $N$ in $u$ for fixed $x$ in \eqref{Uderivs1},
\eqref{Uderivs2} and then differentiating the result in $x$ at $x=0$. 

It now follows from \eqref{ga} that each derivative of $g_{ij}$ at $\sp$
can be expressed as a polynomial in the coordinate derivatives at $\sp$ of  
the components of the curvature tensor and 
the second fundamental form.  Each such coordinate derivative of curvature
or second fundamental form equals the corresponding covariant derivative
plus terms involving lower order derivatives of the metric.  So the
conclusion follows by induction on the order of differentiation of the
metric.  
\qed

\bigskip
The next proposition identifies the linear terms in curvature and second  
fundamental form in the $\pa_z^Kg_{ij}(\sp)$. 
We will not use this result, but it is illuminating and might be useful in
the future.  

Note that the homogeneous term of degree $M$ in $x$ and $N$ in $u$ in the
Taylor series at $(0,0)$ of a
function $f(x,u)$ on $\R^k\times \R^{n-k}$ is given by 
\[
\widehat{f}(M,N) = \frac{1}{M!N!}f_{\ga_1\cdots \ga_M\ga_1'\cdots
  \ga_N'}x^{\ga_1}\cdots x^{\ga_M}u^{\ga_1'}\cdots u^{\ga_N'},
\]
where the subscripts denote partial derivatives and are understood to be
evaluated at $(0,0)$.   In the following, all identities hold modulo   
terms of degree $2$ or higher in $R$, $L$, and their derivatives.  
We use $|\cdot|$ to indicate an index not included in the symmetrization
$(\cdots)$. 

\begin{proposition}\label{linear terms}  
Modulo terms of degree 2 and higher in $R$, $L$, and their derivatives, the 
derivatives of $g_{ij}$ at $\sp=(0,0)$ are given by:
\begin{equation}\label{linearg}
\begin{aligned}
g_{\al\be}&=h_{\al\be}&\\
g_{\al\be,\ga}&=0&\\
g_{\al\be,\ga_1\cdots \ga_M}&
=2\tfrac{M-1}{M+1}R_{\al(\ga_1\ga_2|\be|,\ga_3\cdots \ga_M)} \qquad &M\geq 2\\
g_{\al\be,\ga_1\cdots \ga_M\ga'}&=-2L_{\al\be\ga',\ga_1\cdots \ga_M} \qquad
&M\geq 0\\
g_{\al\be,\ga_1\cdots \ga_M\ga_1'\cdots \ga_N'}&=
2R_{\al(\ga_1'\ga_2'|\be|,\ga_3'\cdots \ga_N')\ga_1\cdots \ga_M} \qquad
&M\geq 0, \quad N\geq 2\\
g_{\al\be',\ga_1\cdots \ga_M}&=0 \qquad &M\geq 0\\
g_{\al\be',\ga'}&=0&\\
g_{\al\be',\ga_1\cdots \ga_M\ga'}&=
-\tfrac{M}{M+1}
R_{\be'\ga'\al(\ga_1,\ga_2\cdots \ga_M)} \qquad &M\geq 1\\
g_{\al\be',\ga_1\cdots \ga_M\ga_1'\cdots \ga_N'}&=2\tfrac{N}{N+1} 
R_{\al(\ga_1'\ga_2'|\be'|,\ga_3'\cdots \ga_N')\ga_1\cdots \ga_M} \qquad
&M\geq 0, \quad N\geq 2\\
g_{\al'\be'}&=h_{\al'\be'}&\\
g_{\al'\be',\ga_1\cdots \ga_M}&=0 \qquad &M\geq 1\\
g_{\al'\be',\ga_1\cdots \ga_M\ga'}&=0 \qquad &M\geq 0\\
g_{\al'\be',\ga_1\cdots \ga_M \ga_1'\cdots \ga_N'}&=2\tfrac{N-1}{N+1}
R_{\al'(\ga_1'\ga_2'|\be'|,\ga_3'\cdots \ga_N')\ga_1\cdots \ga_M} \qquad
&M\geq 0, \quad N\geq 2.
\end{aligned}
\end{equation}
\end{proposition}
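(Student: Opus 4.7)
My plan is to read off the linear part of each Taylor coefficient of $g_{ij}$ directly from the structural identities derived during the proof of Theorem~\ref{main}, and then impose the symmetry of $g_{ij}$ in its index pair. Write $a^i{}_j = \de^i{}_j + \dot a^i{}_j$, where $\dot a^i{}_j$ denotes the part linear in $R$, $L$ and their derivatives. Substituting into \eqref{ga} and using $(a^{-1})^i{}_j = \de^i{}_j - \dot a^i{}_j + O(2)$ together with the Gauss equation $\Rb = R + O(L^2)$ gives
\[
g_{ij} = h_{ij} + 2\dot a_{(ij)} \pmod{\text{quadratic in }R,L},
\]
where $\dot a_{ij}:=h_{ik}\dot a^k{}_j$. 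So $g_{ij,K}(\sp) = 2\dot a_{(ij),K}(\sp)$ at linear order for every multi-index $K$, and the task reduces to computing the linear parts of the Taylor coefficients of each $a^i{}_j$.

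These coefficients are immediate from the linearizations of \eqref{Xderivs}, \eqref{Uderivs1}, \eqref{Uderivs2}, \eqref{aL}, and \eqref{X+1}: replace $a$ and $a^{-1}$ by $\de$ on the right-hand sides and expand both sides in Taylor series. Since $\cX$ and $\cU$ act on a term homogeneous of degree $M$ in $x$ (resp. degree $N$ in $u$) by multiplication by $M$ (resp. $N$), each equation inverts algebraically on Taylor coefficients. Concretely, \eqref{Xderivs} gives $\dot a^\al{}_{\be,\ga_1\cdots\ga_M}(\sp) = \tfrac{M-1}{M+1} R^\al{}_{(\ga_1\ga_2|\be|,\ga_3\cdots\ga_M)}(\sp)$ for $M\geq 2$; \eqref{Uderivs1} produces the coefficient $1$ in front of $R^i{}_{(\ga_1'\ga_2'|\be|,\ga_3'\cdots\ga_N')\ga_1\cdots\ga_M}(\sp)$ for mixed derivatives of $a^i{}_\be$ with $N\geq 2$; \eqref{Uderivs2} produces the coefficient $\tfrac{N-1}{N+1}$ for mixed derivatives of $a^i{}_{\be'}$ with $N\geq 2$, and forces $\dot a^i{}_{\be',\ga'}(x,0)\equiv 0$; \eqref{aL} gives the single $u$-derivative of $a^\al{}_\be$ on $\Si$ as $-h^{\al\mu}L_{\be\mu\ga'}$; and \eqref{X+1} gives the single $u$-derivative of $a^{\al'}{}_\be$ on $\Si$ with prefactor $\tfrac{M}{M+1}$. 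The components not covered by these formulas vanish on $\Si$ by \eqref{aSigma}.

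Substituting these coefficients into $g_{ij,K}=2\dot a_{(ij),K}$ and lowering with $h$ yields, for each entry of \eqref{linearg}, either a single term or a sum $R_{i(\cdots|j|,\cdots)\cdots}+R_{j(\cdots|i|,\cdots)\cdots}$. The two ``dual'' terms are equal by the identity $R_{i(jk)l}=R_{l(jk)i}$ (and its derivative extensions), which follows from the block symmetry $R_{ijkl}=R_{klij}$, the antisymmetries of $R$, and the first Bianchi identity; this is what produces the symmetrization in the outer indices and the factor of $2$ appearing throughout \eqref{linearg}. Finally, at $\sp$ the metric equals $h_{ij}$ to linear order, so its Christoffel symbols vanish at $\sp$ to linear order; hence coordinate and covariant derivatives of $R_{ijkl}$ and $L_{\al\be\al'}$ agree at $\sp$ modulo quadratic terms, and the right-hand sides of \eqref{linearg} may be read in either convention.

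The main obstacle is the degree-$1$ part in $u$ of $a^i{}_\be$, where $\cU^2-\cU$ has a kernel and \eqref{Uderivs1} provides no information; the on-$\Si$ identities \eqref{aL} and \eqref{X+1} must be used instead, and matching their output to the entries $-2L_{\al\be\ga',\ga_1\cdots\ga_M}$ and $-\tfrac{M}{M+1}R_{\be'\ga'\al(\ga_1,\ga_2\cdots\ga_M)}$ of \eqref{linearg} requires careful use of the curvature symmetry above together with the sign coming from the antisymmetry in the last two slots of $R$.
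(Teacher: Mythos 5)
Your proposal is correct and follows essentially the same route as the paper: linearize the structural identities \eqref{Xderivs}, \eqref{Uderivs1}, \eqref{Uderivs2}, \eqref{aL}, \eqref{X+1} (plus \eqref{aSigma}) to get the linear parts of the Taylor coefficients of $a^i{}_j$, observe that \eqref{ga} then gives $Dg_{ij}=2h_{k(i}Da^k{}_{j)}$ modulo quadratic terms, and assemble; you correctly flag the $N=1$ kernel of $\cU^2-\cU$ as the one place requiring the separate on-$\Si$ identities. Your explicit appeal to the pair/antisymmetry identity $R_{i(jk)l}=R_{l(jk)i}$ to merge the two symmetrized terms, and your remark that coordinate and covariant derivatives of $R$ and $L$ agree at $\sp$ modulo quadratic terms, are steps the paper leaves implicit but are exactly what is needed.
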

\begin{proof}
First we identify the linear terms in the derivatives of the $a^i{}_j$,
organized by the order $N$ of differentiation in
$u$ as in the proof of Theorem~\ref{main}.

\bigskip\noindent
$N=0$:
\bigskip

\noindent
Differentiation of \eqref{aSigma} gives
\begin{equation}\label{1a}
\begin{aligned}
&a^\al{}_{\be',\ga_1\cdots \ga_M}=0\qquad M\geq 0,\\
&a^{\al'}{}_{\be,\ga_1\cdots   \ga_M}=0\qquad M\geq 0,\\
&a^{\al'}{}_{\be'}=\delta^{\al'}{}_{\be'}\\
&a^{\al'}{}_{\be',\ga_1\cdots \ga_M}=0\qquad M\geq 1.
\end{aligned}
\end{equation}
Equation~\eqref{Xderivs} shows that $\wh{a^\al{}_{\be}}(1,0)=0$ and
\[
(M^2+M)\wh{a^\al{}_{\be}}(M,0)= \wh{R^\al{}_{\rho\ga\be}}(M-2,0)x^\rho
x^\ga\qquad M\geq 2.
\]
So
\begin{equation}\label{2a}
a^{\al}{}_{\be} = \delta^{\al}{}_{\be},\qquad
a^{\al}{}_{\be,\ga} = 0,\qquad
a^{\al}{}_{\be,\ga_1\cdots \ga_M}=\tfrac{M-1}{M+1}
R^\al{}_{(\ga_1\ga_2|\be|,\ga_3\cdots \ga_M)} \qquad M\geq 2.
\end{equation}

\bigskip\noindent
$N=1$:
\bigskip

\noindent
Equation~\eqref{Uderivs2} shows that
\begin{equation}\label{3a}
a^i{}_{\be',\ga_1\cdots \ga_M \ga'}=0\qquad M\geq 0.
\end{equation}
Differentiating \eqref{aL} gives
\begin{equation}\label{4a}
a^\al{}_{\be,\ga_1\cdots \ga_M \ga'}=
-h^{\al\de}h_{\ga'\be'}L^{\be'}_{\be\de,\ga_1\cdots\ga_M}\qquad M\geq 0.
\end{equation}
Equation~\eqref{X+1} gives
\begin{equation}\label{5a}
a^{\al'}{}_{\be,\ga'} =0,\qquad
a^{\al'}{}_{\be,\ga_1\cdots \ga_M \ga'}=-\tfrac{M}{M+1}
R^{\al'}{}_{\ga'\be(\ga_1,\ga_2\cdots \ga_M)} \qquad M\geq 1.
\end{equation}

\bigskip\noindent
$N\geq 2$:
\bigskip

\noindent
Equation~\eqref{Uderivs1} gives
\begin{equation}\label{6a}
a^i{}_{\be,\ga_1\cdots \ga_M \ga_1'\cdots \ga_N'}=
R^i{}_{(\ga_1'\ga_2'|\be|,\ga_3'\cdots \ga_N')\ga_1\cdots \ga_M} \qquad M\geq 0,
\end{equation}
and \eqref{Uderivs2} gives
\begin{equation}\label{7a}
a^i{}_{\be',\ga_1\cdots \ga_M \ga_1'\cdots \ga_N'}=\tfrac{N-1}{N+1}
R^i{}_{(\ga_1'\ga_2'|\be'|,\ga_3'\cdots \ga_N')\ga_1\cdots \ga_M} \qquad M\geq 0.
\end{equation}

Now consider the linear term in a derivative $Dg_{ij}$, where $D$ denotes
some iterated derivative with respect to $x$ and $u$ evaluated at $(0,0)$.   
The iterated Leibnitz rule applied to \eqref{ga} gives a
sum of quadratic terms in derivatives of the $a^k{}_l$.  By the above, the
only derivatives of an $a^k{}_l$ that have a nonzero constant term when
viewed as a polynomial in $R$ and $L$ and their derivatives are the
undifferentiated  $a^\al{}_\be=\de^\al{}_\be$ and
$a^{\al'}{}_{\be'}=\de^{\al'}{}_{\be'}$. 
So in order to obtain a nonzero linear term in $Dg_{ij}$,
all of the derivatives must land on the same $a^k{}_l$.  It follows that
\begin{equation}\label{atog}
\begin{aligned}
Dg_{\al\be}&=2h_{\rho(\al}Da^\rho{}_{\be)}\\
Dg_{\al'\be'}&=2h_{\rho'(\al'}Da^{\rho'}{}_{\be')}\\ 
  Dg_{\al\be'}&=h_{\al\rho}Da^\rho{}_{\be'}
  +h_{\be'\rho'}Da^{\rho'}{}_{\al},
\end{aligned}
\end{equation}
modulo quadratic terms in $R$ and $L$ and their derivatives.  
Substituting \eqref{1a}--\eqref{7a} into \eqref{atog} and using
$h_{\al\be}$ and $h_{\al'\be'}$ and their inverses to lower and raise 
indices gives \eqref{linearg}.  
\end{proof}

\section{Natural Tensors}\label{tensors}
In this section we prove Theorem~\ref{contractions}.  We begin with an
algebraic lemma, the analog of which for $SO(n)$
appears as Theorem 8 in Section 8 of Chapter 5 of \cite{BFG}.    
The proof there is missing a step.  We are grateful to Charles
Fefferman for showing us how to complete the proof. 

Fix $((p,q),(p',q'))$ and set $k=p+q$, $n=p+q+p'+q'$.  For $r$, $s\geq 0$,
set $\T^{r,s} =(\R^k{}^*)^{\otimes r}\otimes (\R^{n-k}{}^*)^{\otimes s}$ and
recall that $H=O(p,q)\times O(p',q')$.  There is a natural action of $H$ on 
$\T^{r,s}$.   

\begin{lemma}\label{extension}
Set $\bT:=\T^{r_1,s_1}\oplus \cdots \oplus \T^{r_L,s_L}$ for some choice 
of powers $r_i$, $s_i$, $1\leq i\leq L$, and let $\cE\subset \bT$ be a  
non-empty $H$-invariant set.  If $P$ is a polynomial on $\bT$ whose
restriction to $\cE$ is $H$-invariant, then there is a 
polynomial $P'$ which is $H$-invariant on all of $\bT$ such that $P'=P$ on 
$\cE$.   
\end{lemma}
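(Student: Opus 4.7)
The plan is to apply Weyl's \emph{unitarian trick}, using complexification to replace averaging over the non-compact group $H$ with averaging over its compact real form.

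First, I would replace $\cE$ by its Zariski closure $\overline{\cE}$ in $\bT$ viewed as a real algebraic variety. For each fixed $h\in H$, the polynomial $v\mapsto P(h\cdot v)-P(v)$ vanishes on $\cE$ and hence on $\overline{\cE}$, and the set $\{v:h\cdot v\in\overline{\cE}\}$ is Zariski closed and contains $\cE$, hence contains $\overline{\cE}$; so $\overline{\cE}$ is still $H$-invariant and $P$ is still $H$-invariant on it. One may therefore assume $\cE=V(I)$ for an $H$-invariant real ideal $I\subset\R[\bT]$. Next I would complexify: set $\bT_{\C}=\bT\otimes_{\R}\C$, $H_{\C}=O(k,\C)\times O(n-k,\C)$, $I_{\C}=I\cdot\C[\bT_{\C}]$, and $\cE_{\C}=V(I_{\C})\subset\bT_{\C}$. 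Using that $H$ is Zariski dense in $H_{\C}$ (a standard fact for real forms of complex classical groups), a two-step density argument shows that $\cE_{\C}$ is $H_{\C}$-stable and that $P$, viewed as a complex polynomial, is $H_{\C}$-invariant on $\cE_{\C}$: any polynomial vanishing on the real set $\cE$ lies in $I$ hence in $I_{\C}$, giving $H$-invariance of $\cE_{\C}$ and of $P|_{\cE_{\C}}$; then for each fixed $v\in\cE_{\C}$, the polynomial $h\mapsto P(h\cdot v)-P(v)$ on $H_{\C}$ vanishes on the Zariski-dense subset $H$, hence on all of $H_{\C}$.

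Next, I would introduce the compact real form $K=O(k)\times O(n-k)$ of $H_{\C}$, which is also Zariski dense in $H_{\C}$, and define
\[
P'_{\C}(v)=\int_K P(u\cdot v)\,du
\]
with $du$ the normalized Haar measure on $K$. Integration of the coefficients yields a polynomial in $v$, which is $K$-invariant by construction and therefore $H_{\C}$-invariant by Zariski density of $K$ in $H_{\C}$. Since $\cE_{\C}$ is $K$-invariant and $P|_{\cE_{\C}}$ is $K$-invariant, one has $P'_{\C}|_{\cE_{\C}}=P|_{\cE_{\C}}$. Finally, setting $P'(v)=\mathrm{Re}\bigl(P'_{\C}(v)\bigr)$ for $v\in\bT$ produces a real polynomial that is $H$-invariant and equals $P$ on $\cE$ (since $P|_{\cE}$ is real-valued). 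The chief obstacle is the transfer of $H$-invariance across complexification in the previous paragraph; once both $\cE_{\C}$ and $P|_{\cE_{\C}}$ are shown to be $H_{\C}$-equivariant, the unitarian trick supplies the averaging operator that is unavailable directly on the non-compact $H$, and this is presumably the step missing from the $SO(n)$ analog in \cite{BFG}, where averaging over the compact group itself sidesteps the issue.
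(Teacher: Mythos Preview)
Your argument is essentially correct but follows a genuinely different route from the paper's.  The paper linearizes the problem via polarization: it decomposes $P$ into multihomogeneous pieces, realizes each as a linear functional $\ell$ on a tensor space $\cT$ composed with a tensor-power map $\phi$, and then observes that $\ell$ is $H$-invariant on the linear span $\cE^+=\Span\phi(\cE)$.  Semisimplicity of the Lie algebra of $H$ (together with finiteness of $\pi_0(H)$) yields an $H$-invariant complement $\cE^-$, and one simply sets $\ell'=\ell$ on $\cE^+$ and $\ell'=0$ on $\cE^-$.  Your approach instead passes to the complexification $H_\C$, transfers invariance of $\cE$ and of $P|_\cE$ to the complex variety $\cE_\C$ by Zariski density of $H$ in $H_\C$, and then averages over a compact real form $K$; Zariski density of $K$ in $H_\C$ pulls the resulting $K$-invariance back to $H_\C$-invariance, hence $H$-invariance.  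The paper's argument is shorter and stays entirely in the real linear category, with complete reducibility doing all the work; yours makes the link to the compact case explicit and avoids polarization, at the cost of invoking Zariski-density facts for real forms.

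Two small points.  First, the compact group you want is a compact real form of $H_\C=O(h,\C)\times O(h',\C)$; since $h$ and $h'$ are indefinite, the literal $O(k)\times O(n-k)$ does not sit inside $H_\C$ until you conjugate the quadratic forms to the identity over $\C$.  This is only a notational issue, but it should be said.  Second, your closing speculation about the ``missing step'' in \cite{BFG} is off target: the paper notes explicitly that in the definite case averaging over the compact $H$ already suffices, so whatever gap \cite{BFG} has, it is not the unavailability of Haar averaging.
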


\noindent
In the definite case, $P'$ can be obtained by averaging $P$ over $H$.  The
proof below is valid also in the indefinite case.  Instead of integration,
it uses polarization and semisimplicity of $H$.   

\begin{proof}
Write elements of $\bT$ as $(T^1,\cdots,T^L)$ with $T^i\in \T^{r_i,s_i}$. 
Decompose $P$ into its fully homogeneous pieces:  write 
$P=\sum_{j=1}^MP_j$, where $P_j$ is homogeneous of degree $m_{ij}$ in $T^i$
for $1\leq i\leq L$.  Define
\[
\cT_j = \bigotimes_{i=1}^L \big(\T^{r_i,s_i}\big)^{\otimes m_{ij}},\quad 
1\leq j\leq M, 
\]
and set $\cT:=\bigoplus_{j=1}^M\cT_j$, with projections
$\pi_j:\cT\rightarrow \cT_j$. 
Define $\phi_j:\bT\rightarrow \cT_j$, $1\leq j \leq M$, by 
\[
\phi_j(T^1,\cdots,T^L)
=\underbrace{T^1\otimes\cdots\otimes T^1}_{m_{1j}\rm times}\otimes
\cdots\otimes
\underbrace{T^L\otimes\cdots\otimes T^L}_{m_{Lj}\rm times} 
\]
and define $\phi:\bT\rightarrow \cT$ by $\phi = (\phi_1,\cdots, \phi_M)$. 
Clearly the $\phi_j$ and $\phi$ are $H$-equivariant.
Polarization of $P_j$ produces a linear map 
$\ell_j:\cT_j\rightarrow \R$ so that $P_j=\ell_j\circ \phi_j$.    Define
$\ell:\cT\rightarrow \R$ by
$\ell = \sum_{j=1}^M \ell_j\circ\pi_j$.  
Then $P=\ell\circ\phi$.  

Set $\cE^+=\Span \phi(\cE)$.  Then $\cE^+$ is an $H$-invariant
subspace of $\cT$, and $\ell|_{\cE^+}$ is $H$-invariant since $P|_\cE$ is
$H$-invariant.  Since the Lie algebra of $H$ is semisimple
and $H$ has finitely many components, $\cE^+$ has an $H$-invariant
complement $\cE^-$ (see \cite[Th\'eor\`em 3b p. 85]{C}).
Define $\ell':\cT\rightarrow \R$ by $\ell'=\ell$ on 
$\cE^+$, $\ell'=0$ on $\cE^-$, and extend by linearity.  The polynomial
$P' = \ell'\circ \phi$ is then an $H$-invariant extension of $P$ to $\bT$.   
\end{proof} 

\bigskip
\noindent
Given Theorem~\ref{main}, the proof of Theorem~\ref{contractions} is
similar to that for natural tensors on Riemannian manifolds, as in
\cite{ABP}, for example.  

\bigskip\noindent
{\it Proof of Theorem~\ref{contractions}.}
First restrict attention to metrics in submanifold geodesic normal
coordinates about the origin.  The metric at the origin is determined:
$g_{ij}(0,0)=h_{ij}$.  Successive differentiation at the origin 
of (A)--(D) of Proposition~\ref{conditions} gives a set of linear 
relations on the derivatives $\pa_z^Kg_{ij}$, $|K|\geq 1$, which are 
necessary and sufficient for $g$ to be in submanifold geodesic normal
coordinates to infinite order.  Define the vector space
\[
\bG:= \{(\pa_z^Kg_{ij})_{|K|\geq 1}: \text{(A)--(D) hold to infinite order}\}
\]
and its finite-dimensional projection
$\bG_N:=\{(\pa_z^Kg_{ij})_{1\leq |K|\leq N+2}\}$
obtained by truncation at order $N+2$.    

The decomposition $T^*M|_\Si = T^*\Si\oplus N^*\Si$ induces a decomposition  
\begin{equation}\label{tensor decompose}
(T^*M|_\Si)^{\otimes k}=\bigoplus\big((T^*\Si)^{\otimes \ell}\otimes  
(N^*\Si)^{\otimes m}\big),    
\end{equation}
where the direct sum is over all ways of choosing either 
$T^*\Si$ or $N^*\Si$ for each of the $k$ slots.  For each choice, $\ell$ 
and $m$ denote the number of choices of $T^*\Si$ and $N^*\Si$, resp.   
We decompose each iterated covariant derivative
$\na^kR|_\Si\in (T^*M|_\Si)^{\otimes (k+4)}$ of the curvature tensor of $g$  
according to \eqref{tensor decompose}.  After lowering the $N\Si$ index,
the iterated covariant derivative 
$\nb^{k+1}L$ of the second fundamental form is already a section of 
$(T^*\Si)^{\otimes (k+3)}\otimes N^*\Si$.    
The tensors $\na^kR|_\Si$ and $\nb^{k+1}L$ depend only on derivatives of
$g$ of order $\leq k+2$.  Consequently, evaluation of 
\begin{equation}\label{RL list}
(R,\na R,\cdots, \na^NR,L,\nb L,\cdots, \nb^{N+1}L) 
\end{equation}
at the origin determines a polynomial map
\[
\cR:\bG_N\rightarrow \bigoplus \T^{\ell,m}=:\bR_N, 
\]
where now the direct sum is over all summands that occur 
when decomposing every $\na^kR|_\Si$ in \eqref{RL list}, together with the 
summands corresponding to the $\nb^kL$.  We suppress writing the implied
subscript $N$ on $\cR$.  

If $h\in H=O(p,q)\times O(p',q')$ and $g$ is in submanifold geodesic   
normal coordinates, then $h^*g$ is also in submanifold geodesic normal
coordinates.  This 
determines an action of $H$ on $\bG_N$, which is the direct sum of the
usual action on the 
tensors $\pa_z^Kg_{ij}$.  The map $\cR:\bG_N\rightarrow \bR_N$ is 
$H$-equivariant, and in particular its range $\cR(\bG_N)$ is $H$-invariant.    

Theorem~\ref{main} implies the existence of a polynomial map
$\cG:\bR_N\rightarrow \bG_N$ so that $\cG\circ \cR = Id$.   
The map $\cG|_{\cR(\bG_N)}$ is also $H$-equivariant since it is the inverse
of an equivariant map.  

Let now $T$ be a natural submanifold tensor as in
Definition~\ref{naturaltensors}, taking values in $(T^*\Si)^{\otimes
  r}\otimes (N^*\Si)^{\otimes s}$.   
Evaluating $T$ at the origin for a metric in submanifold normal coordinates
gives a tensor in $\T^{r,s}$, each of
whose components is a polynomial on $\bG_N$.  Set $\Tt:=T\circ \cG$.   
Then $\Tt$ is a polynomial map
\begin{equation}\label{T}
\Tt:\bR_N\rightarrow \T^{r,s}.  
\end{equation}
Isometry invariance of $T$ implies that $\Tt|_{\cR(\bG_N)}$ is
$H$-equivariant. 

Define a polynomial 
$P:\bR_N\oplus \T^{r,s} \rightarrow \R$ 
by
\[
P(t,S)=\langle \Tt(t),S\rangle, \qquad t\in \bR_N,\quad S\in \T^{r,s},
\]
where $\langle\cdot,\cdot\rangle$ denotes the quadratic form on $\T^{r,s}$
induced by $g$.  Then
$P|_{\cR(\bG_N)\times \T^{r,s}}$ is $H$-invariant.
Lemma~\ref{extension} with $\bT=\bR_N\oplus \T^{r,s}$ and
$\cE=\cR(\bG_N)\times \T^{r,s}$ implies that there is 
an $H$-invariant polynomial $P':\bR_N\oplus \T^{r,s} \rightarrow \R$ such 
that $P'=P$ on $\cR(\bG_N)\times \T^{r,s}$.

Weyl's classical invariant theory (\cite{W} and/or Section 8 of   
Chapter 5 of \cite{BFG}) shows that any $H$-invariant polynomial 
on $\oplus_i \T^{r_i,s_i}$ is a linear combination of complete
contractions, so $P'$ has this property.  Removing the last tensor $S$
shows that $\Tt$ is a linear combination of partial contractions of tensors
of the form \eqref{tensor product} for metrics in submanifold geodesic
normal coordinates.  So $T=\Tt\circ \cR$ is too.  

The result for general metrics and submanifolds follows by putting the 
metric into submanifold geodesic normal coordinates by a diffeomorphism,
since \eqref{tensor product} transforms tensorially.     
\qed

\bigskip
Natural submanifold differential operators on $\Si$ between functorial
subbundles of the bundles
$(T^*\Si)^{\otimes r}\otimes (N^*\Si)^{\otimes  s}$  
can be defined by analogy with Definition~\ref{naturaltensors} and  
the treatment in \cite{S}:  in local
coordinates, the coefficient of each partial derivative is required to  
depend polynomially on the inverse metric and the metric and its
derivatives. 
Theorem~\ref{contractions} implies a similar characterization for 
natural submanifold differential operators, since a differential operator  
$D:\Gamma(E)\rightarrow \Gamma(F)$ between bundles $E$ and $F$ can be
uniquely written as  
\[
D = \sum_{k=0}^m a_k \Sym(\nb^k)
\]
where $\Sym(\nb^k)$ denotes the symmetrization of the $k^{\text{th}}$ iterated
covariant derivative and
$a_k\in \Gamma(S^kT\Si\otimes E^*\otimes F)$.  If $D$ is a natural
submanifold differential operator, then each $a_k$ is a natural
submanifold tensor to which Theorem~\ref{contractions} applies.


\begin{thebibliography}{BEGM}


\bibitem[ABP]{ABP} M. Atiyah, R. Bott and V. K. Patodi, {\it On the heat
  equation and the index theorem}, Invent. Math. {\bf 19} (1973),
  279--330.  Errata: {\bf 28} (1975), 277--280.

\bibitem[BFG]{BFG} M. Beals, C. Fefferman and R. Grossman, {\it Strictly
  pseudoconvex domains in $\C^n$}, Bull. A.M.S. {\bf 8} (1983), 125--322.  

\bibitem[CGK]{CGK} J. S. Case, C R. Graham, and T.-M. Kuo, {\it Extrinsic
  GJMS operators for submanifolds}, 2023. Preprint, arXiv:2306.11294.  
  
\bibitem[CGKTW]{CGKTW} J. S. Case, C R. Graham, T.-M. Kuo,
  A. Tyrrell, and 
  A. Waldron, {\it A Gauss--Bonnet formula for the renormalized area of
    minimal submanifolds of Poincar\'e--Einstein manifolds},
  2024. Preprint, arXiv:2403.16710.

\bibitem[C]{C} C. Chevalley, {\it Th\'eorie des groupes des Lie III},  
  Paris, Hermann, 1955.  

\bibitem[E]{E} D. B. A. Epstein, {\it Natural tensors on Riemannian
  manifolds}, J. Diff. Geom. {\bf 10} (1975), 631--645.
  
\bibitem[G]{G} A. Gray, {\it The volume of a small geodesic ball of a
  Riemannian manifold}, Michigan Math. J. {\bf 20} (1974), 329--344.  

\bibitem[S]{S} P. Stredder, {\it Natural differential operators on
  Riemannian manifolds and representations of the orthogonal and special
  orthogonal groups}, J. Diff. Geom. {\bf 10} (1975), 647--660. 
  
\bibitem[W]{W} H. Weyl, {\it The Classical Groups}, Princeton, Princeton
  University Press, 1946.  

\end{thebibliography}
\end{document}